\newcommand*{\mailto}[1]{\href{mailto:#1}{\nolinkurl{#1}}}
\newtheorem{theorem}{Theorem}[section]
\newtheorem{lemma}[theorem]{Lemma}
\newcommand{\R}{{\mathbb R}}
\newcommand{\C}{{\mathbb C}}
\newcommand{\cA}{{\mathcal A}}
\newcommand{\cR}{{\mathcal R}}
\newcommand{\om}{\omega}
\newcommand{\nn}{\nonumber}
\newcommand{\I}{\mathrm{i}}
\newcommand{\E}{\mathrm{e}}
\numberwithin{equation}{section}
\begin{document}

\title[Zero Energy Scattering for Schr\"odinger Operators and Applications]{Zero Energy Scattering for One-Dimensional Schr\"odinger Operators and Applications to Dispersive Estimates}

\author[I.\ Egorova]{Iryna Egorova}
\address{B. Verkin Institute for Low Temperature Physics\\ 47, Lenin ave\\ 61103 Kharkiv\\ Ukraine}
\email{\href{mailto:iraegorova@gmail.com}{iraegorova@gmail.com}}

\author[M.\ Holzleitner]{Markus Holzleitner}
\address{Faculty of Mathematics\\ University of Vienna\\Oskar-Morgenstern-Platz 1\\ 1090 Wien\\ Austria}
\email{\href{mailto:amhang1@gmx.at}{amhang1@gmx.at}}

\author[G.\ Teschl]{Gerald Teschl}
\address{Faculty of Mathematics\\ University of Vienna\\
Oskar-Morgenstern-Platz 1\\ 1090 Wien\\ Austria\\ and International Erwin Schr\"odinger
Institute for Mathematical Physics\\ Boltzmanngasse 9\\ 1090 Wien\\ Austria}
\email{\mailto{Gerald.Teschl@univie.ac.at}}
\urladdr{\url{http://www.mat.univie.ac.at/~gerald/}}

\thanks{Proc. Amer. Math. Soc. Ser. B {\bf 2}, 51--59 (2015)}
\thanks{{\it Research supported by the Austrian Science Fund (FWF) under Grants No.\ Y330 and W1245}}

\keywords{Schr\"odinger equation, scattering, resonant case, dispersive estimates}
\subjclass[2010]{Primary 34L25, 35Q41; Secondary 81U30, 81Q15}

\begin{abstract}
We show that for a one-dimensional Schr\"odinger operator with a potential, whose $(j+1)$'th moment is integrable, the $j$'th derivative of the scattering matrix is in the Wiener algebra of functions with integrable Fourier transforms. We use this result to improve the known dispersive estimates with integrable time decay for the one-dimensional Schr\"odinger equation
in the resonant case.
\end{abstract}

\maketitle

\section{Introduction}

This paper is concerned with the one-dimensional Schr\"odinger equation
\begin{equation} \label{Schr}
  \I \dot \psi(x,t)=H \psi(x,t), \quad H=-\frac{d^2}{dx^2}  + V(x),\quad (x,t)\in\R^2,
\end{equation}
with a real-valued potential $V$ contained in one of the spaces
$L^1_{\sigma}=L^1_{\sigma}(\R)$, $\sigma\in\R$, associated with the norms
\[
   \Vert \psi\Vert_{L^p_{\sigma}}= \begin{cases} \left(\int_{\R}(1+|x|)^{p\sigma} |\psi(x)|^p dx\right)^{1/p}, & 1\le p <\infty,\\
   \sup_{x\in\R} (1+|x|)^{\sigma} |\psi(x)|, & p=\infty.
   \end{cases}
\]
We recall (e.g., \cite{Mar} or \cite[Sect.~9.7]{tschroe}) that for $V\in L^1_1$ the operator $H$ has a purely absolutely continuous spectrum
on $[0,\infty)$ and a finite number of eigenvalues in $(-\infty,0)$. Associated with the absolutely continuous spectrum is the scattering matrix
\[
\mathcal{S}(k) = \begin{pmatrix} T(k) & R_-(k)\\ R_+(k) & T(k) \end{pmatrix},
\]
which maps incoming to outgoing states at a given energy $\om=k^2\ge 0$. Here $T$ is the transmission coefficient and
$R_\pm$ are the reflection coefficients with respect to right and left incident. At the edge of the continuous spectrum $k=0$ the scattering matrix generically
looks like
\[
\mathcal{S}(0) = \begin{pmatrix} 0 & -1\\ -1 & 0 \end{pmatrix}.
\]
More precisely, this happens when the zero energy is non-resonant, that is, if  the equation $H f_0 =0$ has no bounded (distributional) solution. In the resonant situation
the behavior of the scattering matrix is more delicate. For $V\in L^1_1$ it is already nontrivial to establish continuity of the scattering matrix at $k=0$ (for
$V\in L^1_2$ de l'Hospital's rule suffices).
This question arose around 1985 in an attempt to clarify whether the low-energy asymptotics of the scattering matrix,
 obtained for $V\in L_2^1$ in \cite{DT}, are valid for $V\in L_1^1$. The problem was solved independently
by Guseinov \cite{Gus2} and Klaus \cite{Kl} (for a refined version see \cite{AK} and \cite[Theorem~2.1]{EKMT}).
It also plays an important role in semiclassical analysis, see \cite{CSST}
and the references therein. Furthermore, if $V\in L^1_{j+1}$ with $j>0$, then, away from $0$, one can take derivatives of the scattering matrix up to order $j$, and in the non-resonant case
it is again easy to see that they are continuous at $k=0$. This clearly raises the question about continuity at $k=0$ of the $j$'th derivative in the resonant case. We will establish this as one of
our main results in Theorem~\ref{MT}. This result is new even for the first derivative. We remark, that if $V$ decays exponentially, then $\mathcal{S}(k)$ is
analytic in a neighborhood of $k=0$, and the full Taylor expansion can be obtained \cite{BGK,BGW}.

It is important to emphasize that this question is not only of interest in scattering theory, but also plays a role in solving the Korteweg--de Vries equation
via the inverse scattering transform (see e.g., \cite{GT}, where continuity of higher derivatives is needed), and in deriving dispersive estimates for \eqref{Schr}.
The latter case has attracted considerable interest (e.g.\ \cite{EKMT,gold} and the references therein) due to its importance for proving asymptotic stability of
solitons for the associated nonlinear evolution equations (see e.g.\ \cite{BS,KK11}).

As an application of our results, we will establish the following dispersive decay estimate with integrable time decay in the resonant case:

\begin{theorem} \label{thm:mainr}
Suppose $V \in L_3^1(\R)$ and suppose there is a bounded solution $f_0$ of $H f_0=0$ satisfying the normalization $\lim_{x \to \infty} (|f_0(x)|^2 + |f_0(-x)|^2)=2$.
Denote by $P_0:L^1_2 \to L^\infty_{-2}$ the operator given by the kernel $[P_0](x,y)=f_0(x)f_0(y)$. By $P_{ac}$ we denote the projector onto the absolutely
continuous subspace of $H$. Then $\E^{-\I tH}P_{ac}$ extends to a bounded operator $L^1_2 \to L^\infty_{-2}$ satisfying the following decay estimate:
\begin{equation} \label{resdec}
\Vert \E^{-\I tH}P_{ac}-(4 \pi \I t)^{-\frac{1}{2}}P_0 \Vert_{L^1_2 \to L^\infty_{-2}}=\mathcal{O}(t^{-3/2}),\quad t\to\infty .
\end{equation}
\end{theorem}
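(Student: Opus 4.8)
The plan is to combine Theorem~\ref{MT} with a stationary-phase analysis of the spectral representation of $\E^{-\I tH}P_{ac}$, in the spirit of \cite{EKMT}. First I would write out the integral kernel via Stone's formula, using the Jost solutions $f_\pm(k,\cdot)$ of $Hf=k^2f$, normalized by $f_\pm(k,x)=\E^{\pm\I kx}(1+o(1))$ as $x\to\pm\infty$, the associated Faddeev solutions $m_\pm(k,x)=\E^{\mp\I kx}f_\pm(k,x)$, and the Wronskian $W(k)=W(f_+(k,\cdot),f_-(k,\cdot))=-2\I k/T(k)$. From $[(H-k^2)^{-1}](x,y)=f_+(k,x)f_-(k,y)/W(k)$ for $y\le x$, $\im k>0$, and the reflection symmetry $f_\pm(-k,\cdot)=\overline{f_\pm(k,\cdot)}$ on the real axis, this gives, for $y\le x$,
\[
  [\E^{-\I tH}P_{ac}](x,y)=\frac{1}{2\pi}\int_{\R}\E^{-\I t k^2+\I k(x-y)}\,a(k,x,y)\,dk,\qquad a(k,x,y):=T(k)\,m_+(k,x)\,m_-(k,y),
\]
with the symmetric expression for $y>x$. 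Crucially, in the resonant case the apparent singularity of the resolvent at $k=0$ disappears here, because $k/W(k)=\I T(k)/2$ is continuous and non-zero at $k=0$.

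Next I would establish the regularity needed for the oscillatory integral: that $a(\cdot,x,y)\in C^2(\R)$, with each $\partial_k^j a(\cdot,x,y)$ ($j=0,1,2$) in the Wiener algebra $\cA$ and $\|\partial_k^j a(\cdot,x,y)\|_{\cA}\le C(1+|x|)^2(1+|y|)^2$. For the factor $T$ this is precisely Theorem~\ref{MT} with $j=2$, which is exactly where the hypothesis $V\in L^1_3$ is used. For the Faddeev functions I would use the Povzner--Levitan representation $m_+(k,x)=1+\int_0^\infty B(x,s)\E^{\I ks}\,ds$ (and its mirror image for $m_-$), so that for $j\ge 1$ one has $\partial_k^j m_+(k,x)=\int_0^\infty(\I s)^j B(x,s)\E^{\I ks}\,ds$, whose Wiener-algebra norm is bounded by $\int_0^\infty s^j|B(x,s)|\,ds\le C(1+|x|)^j$, the last being the classical kernel estimate (see \cite{Mar}, \cite[Sect.~9.7]{tschroe}). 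Since $\cA$ is a Banach algebra, the Leibniz rule then yields the asserted bound for $a$.

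With this in hand, the extraction of the leading term is a stationary-phase computation at $k=0$. Setting $p=|x-y|$ and completing the square, $\E^{-\I tk^2+\I kp}=\E^{\I p^2/(4t)}\E^{-\I t(k-p/(2t))^2}$, so after the substitution $k\mapsto k+p/(2t)$ the kernel becomes $\frac{1}{2\pi}\E^{\I p^2/(4t)}\int_\R\E^{-\I tk^2}a(k+\tfrac{p}{2t},x,y)\,dk$. I would then invoke the Fresnel-type asymptotics (van der Corput with a contribution from the endpoint $k=0$; cf.\ \cite{EKMT}): for $g\in C^2$ with $g,g',g''\in\cA$, $\int_\R\E^{-\I tk^2}g(k)\,dk=\sqrt{\pi/(\I t)}\,g(0)+\mathcal{O}(t^{-3/2}(\|g'\|_\cA+\|g''\|_\cA))$. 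Combined with the Taylor estimates $a(\tfrac{p}{2t},x,y)=a(0,x,y)+\mathcal{O}(\tfrac{p}{t}\|\partial_k a\|_\cA)$ and $\E^{\I p^2/(4t)}=1+\mathcal{O}(\tfrac{p^2}{t})$, and using $p\le 2(1+|x|)(1+|y|)$ to absorb every remainder against the weights, this gives, uniformly in $x,y$ (and likewise for $y>x$),
\[
  [\E^{-\I tH}P_{ac}](x,y)=(4\pi\I t)^{-1/2}\,T(0)\,f_+(0,x)f_-(0,y)+\mathcal{O}\!\big(t^{-3/2}(1+|x|)^2(1+|y|)^2\big),
\]
since $\tfrac{1}{2\pi}\sqrt{\pi/(\I t)}=(4\pi\I t)^{-1/2}$ and $a(0,x,y)=T(0)f_+(0,x)f_-(0,y)$. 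Finally, in the resonant case $f_-(0,\cdot)=c_0 f_+(0,\cdot)$, and writing $f_0=\beta f_+(0,\cdot)$ the normalization $\lim_{x\to\infty}(|f_0(x)|^2+|f_0(-x)|^2)=2$, together with the classical low-energy relations among $T(0)$, $c_0$ and $\beta$ (cf.\ \cite{EKMT} and the references therein), gives $T(0)f_+(0,x)f_-(0,y)=f_0(x)f_0(y)=[P_0](x,y)$. Hence $|[\E^{-\I tH}P_{ac}](x,y)-(4\pi\I t)^{-1/2}[P_0](x,y)|\le C t^{-3/2}(1+|x|)^2(1+|y|)^2$, which is exactly the asserted $\mathcal{O}(t^{-3/2})$ bound for the operator norm $L^1_2\to L^\infty_{-2}$.

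The main obstacle is the behaviour at $k=0$: in the resonant case the resolvent is singular there, and one needs two $k$-derivatives of the transmission coefficient in the Wiener algebra under the sharp moment condition $V\in L^1_3$ --- precisely the content of Theorem~\ref{MT}. This has to be meshed with the polynomial-in-$(x,y)$ control of the $k$-derivatives of the Faddeev functions, so that the weights $(1+|x|)^{\pm2}$ built into $L^1_2\to L^\infty_{-2}$ simultaneously absorb that growth and the mismatch between the true stationary point $k_\ast=(x-y)/(2t)$ and $k=0$. By comparison the high-energy regime is benign, since the factor $k$ from the spectral measure cancels the $1/W(k)$, leaving the bounded amplitude $T(k)m_+(k,x)m_-(k,y)$, which stays in $\cA$ because $T(k)-1$ and $m_\pm(k,x)-1$ tend to $0$ as $|k|\to\infty$.
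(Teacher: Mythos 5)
Your overall architecture matches the paper's: write the kernel via Stone's formula and the Jost solutions, observe that $k/W(k)=\I T(k)/2$ kills the apparent singularity at $k=0$ in the resonant case, extract the leading Fresnel term $(4\pi\I t)^{-1/2}T(0)f_+(y,0)f_-(x,0)$ and identify it with $(4\pi\I t)^{-1/2}[P_0](x,y)$ (this identification, via $c_+c_-T(0)=1$, is exactly the paper's Lemma~\ref{lem5rc}), and gain the extra half power of $t$ from one more $k$-derivative of the amplitude in the Wiener algebra, which is where $V\in L^1_3$ and Theorem~\ref{MT} enter. Your Fresnel-type expansion (complete the square, shift, expand around the endpoint) is a mild variant of the paper's route (subtract the value at $k=0$, integrate by parts once, apply the van der Corput lemma); either works.

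However, there is a genuine gap at the technical heart of the argument: your claimed bound $\|\partial_k^j m_\pm(\cdot,x)\|_{\cA}\le C(1+|x|)^j$ \emph{uniformly in $x\in\R$} does not follow from the ``classical kernel estimate.'' The transformation-operator bound is \eqref{est1}, $|B_\pm(x,y)|\le \E^{\gamma_\pm(x)}\eta_\pm(x+y)$, and the factor $\E^{\gamma_\pm(x)}$ grows \emph{exponentially} as $x\to\mp\infty$ (for $V\in L^1_1$ one only has $\gamma_+(x)\le C+|x|\,\|V\|_{L^1}$ for $x\le 0$). So $\int_0^\infty s^j|B_+(x,s)|\,ds$ is controlled polynomially in $x$ only on the half-line $x\ge 0$ --- this is precisely the content of Lemma~\ref{lem1rc}, which is stated only for $\pm x\ge 0$. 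In the kernel $T(k)h_+(y,k)h_-(x,k)$ with $x\le y$, one of the two factors is always on the ``wrong'' side whenever $x$ and $y$ have the same sign, and your factor-by-factor Leibniz estimate breaks down there. The paper's Lemma~\ref{lastrc} exists exactly to repair this: it splits into the three cases $x\le 0\le y$, $0\le x\le y$, $x\le y\le 0$ and, in the latter two, uses the scattering relations \eqref{scat-rel} to rewrite $T(k)h_-(x,k)$ as $R_+(k)h_+(x,k)\E^{2\I xk}+h_+(x,-k)$ (and mirror), so that only $h_\pm$ evaluated on the correct half-lines appear and all $x$-growth is carried by the explicit exponentials $\E^{2\I xk}$, whose $k$-derivatives produce the factors $|x|$, $|x|^2$. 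Without this step (or an equivalent one), the claimed bound $\|\partial_k^j a(\cdot,x,y)\|_{\cA}\le C(1+|x|)^2(1+|y|)^2$ is unjustified, and the final absorption of the remainders into the weights of $L^1_2\to L^\infty_{-2}$ has no support.
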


This theorem is an improvement of an earlier result by Goldberg \cite{gold}, who established it for $V \in L_4^1(\R)$.
If there is no resonance (i.e.\ no bounded solution) this result (with $P_0=0$) was shown for $V \in L_2^1(\R)$ in \cite[Theorem~1.2]{EKMT}.
For extensions to discrete one-dimensional Schr\"odinger equations (Jacobi operators) see \cite{EHT}, \cite{EKT}.

\section{Low energy scattering}
\label{sec1}

In this section we establish some properties of the scattering data for our operator $H$ with $V\in L_{j+1}^1$, $j\geq 0$.
To this end we introduce the Banach algebra $\cA$ of Fourier transforms of integrable functions:
\begin{equation}\label{cA}
\cA = \left\{f(k):\,
f(k) = \int_\R \E^{\I k p}\hat{f}(p)dp, \,\hat{f}(\cdot)\in L^1(\R) \right\}
\end{equation}
with the norm $\|f\|_{\cA}= \|\hat{f}\|_{L^1}$, plus the corresponding unital Banach algebra $\cA_1$:
\begin{equation}\label{cA1}
\cA_1 = \left\{f(k):\,
f(k) =c+ \int_\R \E^{\I k p}\hat{g}(p)dp, \,\hat{g}(\cdot)\in L^1(\R),\,c\in\C\right\}
\end{equation}
with the norm $\|f\|_{\cA_1}= |c|+\|\hat{g}\|_{L^1}$. We also use the fact, which is known as Wiener's lemma \cite{Wiener}, that if $f\in\cA_1\setminus\cA$ and $f(k)\not =0$ for all $k\in\R$ then $f^{-1}(k)\in\cA_1$.

First we recall a few facts from scattering theory \cite{DT}, \cite{Mar}.
If $V\in L^1_1$, there exist Jost solutions $f_\pm(x,k)$ of $H f=k^2 f$, $k\in \overline{\C_+}$, which asymptotically behave like $f_\pm(x,k)\sim  \E^{\pm \I kx}$ as $x\to \pm \infty$. These solutions are given by
\begin{equation}\label{Jost}
f_\pm(x,k) = \E^{\pm \I kx} h_\pm(x,k), \qquad h_\pm(x,k)
= 1 \pm \int_{0}^{\pm\infty} B_\pm(x,y) \E^{\pm 2\I k y}dy,
\end{equation}
where $B_\pm(x,y)$ are real-valued and satisfy (see \cite[\S 2]{DT} or \cite[\S 3.1]{Mar})
\begin{align}\label{est1}
|B_\pm (x,y)|&\le  \E^{\gamma_\pm(x)}\eta_\pm(x+y),\\
\label{est11}
|\frac{\partial}{\partial x}B_\pm (x,y)\pm V(x+y)|
&\le 2 \E^{\gamma_\pm(x)}\eta_\pm(x+y)\eta_\pm(x),
\end{align}
with
\begin{equation}\label{est2}
\gamma_\pm(x)=\int_{x}^{\pm\infty}(y-x)|V(y)|dy,\quad
\eta_\pm(x)=\pm\int_{x}^{\pm\infty}|V(y)|dy.
\end{equation}
Since $\eta_\pm\in L^1(\R_\pm)$, we conclude that
\begin{equation}\label{ast}
h_\pm(x,\cdot) - 1, \quad h'_\pm(x,\cdot)\in\mathcal A,
\quad \forall x\in\R.
\end{equation}
Here and throughout the rest of this paper a prime will always denote a derivative with respect to the spatial variable $x$.
As an immediate consequence of the estimates \eqref{est1} and \eqref{est11} we have the following strengthening of \eqref{ast}:

\begin{lemma} \label{lem1rc}
Let $V \in L_{j+1}^1$. Then $\frac{\partial^l}{\partial k^l} (h_\pm(x,k)-1)$, $\frac{\partial^l}{\partial k^l} h'_\pm(x,k)$ are contained in $\mathcal A$ for $0\le l \le j$.
Moreover, for $\pm x \geq 0$, the $\mathcal A$-norms of these expressions do not depend on $x$.
\end{lemma}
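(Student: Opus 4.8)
The plan is to differentiate the integral representation \eqref{Jost} under the integral sign and estimate the resulting kernels using \eqref{est1}–\eqref{est2}. Writing $h_\pm(x,k)-1 = \pm\int_0^{\pm\infty} B_\pm(x,y)\E^{\pm 2\I ky}\,dy$, the Fourier-transform structure is manifest: this is (up to the change of variables $p=\pm 2y$) the Fourier transform of the function $p\mapsto \tfrac12 B_\pm(x,\pm p/2)\mathbf 1_{\R_\pm}(p)$. Applying $\partial_k^l$ brings down a factor $(\pm 2\I y)^l$, so $\partial_k^l(h_\pm(x,k)-1)$ is the Fourier transform of $p\mapsto \tfrac12(\pm\I p)^l B_\pm(x,\pm p/2)\mathbf 1_{\R_\pm}(p)$, and its $\cA$-norm equals
\[
\left\| \partial_k^l(h_\pm(x,\cdot)-1)\right\|_{\cA} = \int_0^{\pm\infty} (2|y|)^l\, |B_\pm(x,y)|\, dy.
\]
So the task reduces to showing this integral is finite and, for $\pm x\ge 0$, bounded uniformly in $x$.

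First I would handle $h_\pm(x,k)-1$. Insert the bound \eqref{est1}: for, say, the $+$ case and $x\ge 0$,
\[
\int_0^\infty (2y)^l |B_+(x,y)|\,dy \le 2^l \E^{\gamma_+(x)} \int_0^\infty y^l \eta_+(x+y)\,dy
\le 2^l \E^{\gamma_+(0)} \int_x^\infty (t-x)^l \eta_+(t)\,dt,
\]
after substituting $t=x+y$ and using that $\gamma_+$ is nonincreasing. Since $(t-x)^l \le t^l$ for $t\ge x\ge 0$, this is at most $2^l\E^{\gamma_+(0)}\int_0^\infty t^l\eta_+(t)\,dt$, which by the definition of $\eta_+$ and Fubini equals a constant times $\int_0^\infty s^{l+1}|V(s)|\,ds < \infty$ because $V\in L^1_{j+1}$ and $l\le j$. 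This bound is independent of $x\ge 0$, giving the claim for $h_\pm$. The argument for $x\le 0$ in the $+$ case still gives membership in $\cA$ (with an $x$-dependent constant from $\gamma_+(x)$), which is all that is asserted there; the $-$ case is symmetric.

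For $h'_\pm(x,k)$ I would differentiate \eqref{Jost} in $x$: $h'_\pm(x,k) = \pm\int_0^{\pm\infty}\partial_x B_\pm(x,y)\E^{\pm 2\I ky}\,dy$ (the boundary term at $y=0$ vanishes since $B_\pm(x,0)$ contributes nothing after the differentiation convention used in \cite{DT}; if a boundary term does appear it is absorbed into the smooth leading behavior and I would track it explicitly). Now the natural estimate is \eqref{est11}, which controls $\partial_x B_\pm(x,y) \pm V(x+y)$ rather than $\partial_x B_\pm$ itself. So I split $h'_\pm(x,k)$ as the Fourier transform of $\mp V(x\pm\cdot)$ plus a remainder: the term $\mp\int_0^{\pm\infty} V(x+y)\E^{\pm 2\I ky}\,dy$ is already a Fourier transform of an $L^1_{j+1}$-tail of $V$, hence in $\cA$ with $\partial_k^l$ giving $\cA$-norm $\lesssim \int_x^{\pm\infty}|t-x|^l|V(t)|\,dt$, uniformly bounded for $\pm x\ge 0$ as before; and the remainder is estimated by \eqref{est11}, yielding $\int_0^{\pm\infty}(2|y|)^l\cdot 2\E^{\gamma_\pm(x)}\eta_\pm(x+y)\eta_\pm(x)\,dy$, which is even better behaved (it carries the extra bounded factor $\eta_\pm(x)$). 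The main obstacle is purely bookkeeping: keeping the two-sided ($\pm$) cases and the regions $\pm x\ge 0$ versus $\pm x\le 0$ straight, and confirming that no boundary terms from integration by parts in $x$ spoil the clean Fourier-transform structure — once that is in place, everything follows from \eqref{est1}, \eqref{est11}, the moment condition on $V$, and Fubini.
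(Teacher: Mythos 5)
Your proof is correct and follows exactly the route the paper intends: the paper states this lemma without proof as an ``immediate consequence'' of the kernel estimates \eqref{est1}--\eqref{est11}, and your argument — differentiating the Fourier representation \eqref{Jost} under the integral to pick up factors of $y^l$, then reducing to $\int_0^{\pm\infty}|t|^{l}\eta_\pm(t)\,dt<\infty$ via Fubini and the moment condition, with the $V(x+y)$ splitting for $h'_\pm$ — is precisely the omitted computation. (Your worry about boundary terms for $h'_\pm$ is moot: no integration by parts in $x$ occurs, only differentiation under the integral sign in the parameter $x$.)
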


The fact that $f_\pm(x,-k)$ also solve $H f=k^2 f$ for $k\in\R$ leads to the scattering relations
\begin{equation}\label{scat-rel}
T(k)f_\pm(x,k)=R_\mp(k)f_\mp(x,k) + f_\mp(x,-k),
\end{equation}
where the transmission coefficient $T$ and the reflection coefficients $R_\pm$ can be expressed in terms of Wronskians.
To this end we denote by
\[
W(f(x),g(x))= f(x)g'(x)-f'(x)g(x)
\]
the usual Wronskian and set
\[
W(k)=W(f_-(x,k),f_+(x,k)),\qquad W_\pm(k)=W(f_\mp(x,k),f_\pm(x,- k)).
\]
Then
\begin{equation}\label{defRR}
T(k)= \frac{2\I k}{W(k)},\quad R_\pm(k)= \mp\frac{W_\pm(k)}{W(k)}.
\end{equation}
The transmission and reflection coefficients
are elements of the Wiener algebra, which was established in \cite[Theorem 2.1]{EKMT}. Here we extend this result to the derivatives of the scattering data.

\begin{theorem}\label{MT}
If $V\in L^1_{j+1}$, then $\frac{d^l}{dk^l}(T(k)-1)\in\cA$ and $\frac{d^l}{dk^l}R_\pm(k)\in\cA$ for $0\le l \le j$.
\end {theorem}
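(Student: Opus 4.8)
The plan is to read off the scattering data from the Wronskian formulas \eqref{defRR}, reducing everything to the behavior of $W(k)$, $W_\pm(k)$ and their derivatives. Away from $k=0$ the Wronskians are smooth and nonvanishing (no embedded eigenvalues or resonances on $(0,\infty)$), so the only issue is a neighborhood of $k=0$; and since $\cA$ is an algebra and $\cA_1$ is a unital Banach algebra closed under inversion of non-vanishing elements (Wiener's lemma), it suffices to (a) show the relevant Wronskians and their first $j$ $k$-derivatives lie in $\cA_1$, and (b) handle the possible zero of $W(k)$ at $k=0$ in the resonant case. Evaluating the Wronskians at a convenient point, say $x=0$, and using Lemma~\ref{lem1rc}, we have $h_\pm(0,k)-1$, $h_\pm'(0,k)$ and all their $k$-derivatives up to order $j$ in $\cA$. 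Since $f_\pm(x,k)=\E^{\pm\I kx}h_\pm(x,k)$, at $x=0$ we get $f_\pm(0,k)=h_\pm(0,k)$ and $f_\pm'(0,k)=\pm\I k\,h_\pm(0,k)+h_\pm'(0,k)$; plugging these into $W(k)=f_-(0,k)f_+'(0,k)-f_-'(0,k)f_+(0,k)$ and the analogous expression for $W_\pm(k)$ (which also involves $f_\pm(0,-k)$, hence $h_\pm(0,-k)-1$ etc., again in $\cA$ with $k$-derivatives in $\cA$) expresses $W$, $W_\pm$ as polynomials in $k$ with $\cA_1$-coefficients. Hence $W,W_\pm\in\cA_1$ together with their first $j$ derivatives (differentiation in $k$ stays in $\cA_1$ since it only produces new $\cA$ terms and polynomial factors).

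In the \emph{non-resonant} case $W(0)\neq 0$, so $W(k)\neq 0$ for all $k\in\R$, and $W\in\cA_1$ with $W(k)\to W(0)\neq 0$; Wiener's lemma gives $W(k)^{-1}\in\cA_1$, and by the quotient/product rule its derivatives up to order $j$ lie in $\cA_1$ as well (each derivative of $W^{-1}$ is a rational expression in $W^{-1}\in\cA_1$ and the $\cA$-functions $W^{(l)}$). Since $T(k)=2\I k/W(k)$ and $R_\pm(k)=\mp W_\pm(k)/W(k)$, and $T(0)=0$, $R_\pm(0)=-1$ in this case (so $T(k)-1$ and $R_\pm(k)$ have the right constants subtracted), multiplying through and differentiating gives the claim.

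The main obstacle is the \emph{resonant} case, where $W(0)=0$, so $W$ is not invertible in $\cA_1$ directly. Here one uses that $W(k)=W(0)+\I k\,\alpha + k\,r(k)$ with $r\in\cA$, $r(0)=0$; since $W(0)=0$ one factors $W(k)=\I k\,g(k)$ where $g(k)=\alpha + r(k)\in\cA_1$ (this factorization is legitimate because $(W(k)-W(0))/k$, being the Fourier transform of the primitive of $\hat W$'s relevant piece, stays in $\cA_1$ — this is exactly the step where the extra moment is consumed, mirroring the classical Guseinov–Klaus argument and \cite[Theorem~2.1]{EKMT}). In the resonant case $g(0)=\alpha\neq 0$ (the constant is, up to normalization, governed by the bounded zero-energy solution), so $g(k)\neq 0$ for all real $k$ and Wiener's lemma applies to $g$ instead of $W$: then $T(k)=2/g(k)\in\cA_1$ with $T(0)=2/\alpha$, and $R_\pm(k)=\mp W_\pm(k)/(\I k\, g(k))$; for this to be in $\cA$ one checks that $W_\pm(k)$ has a matching zero at $k=0$, namely $W_\pm(k)=\I k\, W_\pm'(0)\cdot(\text{unit})+\dots$, so that the singular factor cancels, after which the quotient rule as before yields $\frac{d^l}{dk^l}R_\pm\in\cA$ and $\frac{d^l}{dk^l}(T(k)-1)\in\cA$ for $0\le l\le j$. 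Carrying out the factorization carefully so that \emph{all} $j$ derivatives remain controlled — i.e. that $(W(k)-W(0))/k$ and its first $j-1$ derivatives lie in $\cA_1$ — is the technical heart of the argument and is where the hypothesis $V\in L^1_{j+1}$ (one more moment than would be needed for the $j$th derivative away from zero) is precisely used.
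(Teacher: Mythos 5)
Your overall architecture is the right one and matches the paper's: reduce to the Wronskians via \eqref{defRR}, use that $h_\pm(0,\cdot)-1$, $h'_\pm(0,\cdot)$ and their first $j$ $k$-derivatives lie in $\cA$ (Lemma~\ref{lem1rc}), factor the zero of $W$ at $k=0$ in the resonant case, and invert by Wiener's lemma. (A small slip along the way: $W(k)=2\I k\,h_+(k)h_-(k)+h_-(k)h'_+(k)-h'_-(k)h_+(k)$ grows linearly, so $W\notin\cA_1$ and Wiener's lemma cannot be applied to $W$ itself even in the non-resonant case; one must always work with $W(k)/(2\I k)=T(k)^{-1}$ or a similar regularization.)

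The genuine gap is in the step you yourself flag as the technical heart: justifying that $\bigl(W(k)-W(0)\bigr)/k$ — equivalently $\bigl(h_-(k)h'_+(k)-h'_-(k)h_+(k)\bigr)/(2\I k)$ — together with its first $j$ derivatives lies in $\cA_1$. The generic argument you invoke (pass to the primitive of the Fourier transform of the numerator) costs exactly one moment: if $w\in\cA$ with $w(0)=0$, then $w(k)/k$ has Fourier transform $\tilde w(q)=\int_q^{\pm\infty}\hat w$, and $\int|q|^l|\tilde w(q)|\,dq$ is controlled by $\int|q|^{l+1}|\hat w(q)|\,dq$. Under $V\in L^1_{j+1}$ the kernels $B_\pm(0,\cdot)$, $\partial_xB_\pm(0,\cdot)$ are only guaranteed moments up to order $j$ (that is precisely what Lemma~\ref{lem1rc} spends the hypothesis on), so the primitive argument controls only $j-1$ derivatives of $w(k)/k$; carried through, your proof yields the theorem for $V\in L^1_{j+2}$, not $V\in L^1_{j+1}$. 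This is exactly the ``de l'Hospital suffices for $V\in L^1_2$'' level of argument mentioned in the introduction, and it is not the Gusseinov--Klaus mechanism you cite. What is missing is a cancellation: the paper rewrites the numerator through $\Phi_\pm(k)=h_\pm(k)h'_\pm(0)-h'_\pm(k)h_\pm(0)=2\I k\Psi_\pm(k)$ with $\Psi_\pm(k)=\int_0^{\pm\infty}H_\pm(y)\E^{\pm2\I ky}dy$, where $H_\pm=D_\pm h_\pm(0)-K_\pm h'_\pm(0)$ is a specific combination of primitives of $B_\pm(0,\cdot)$ and $\partial_xB_\pm(0,\cdot)$. Each primitive separately only decays like $\gamma_\pm$ (one order worse than $\eta_\pm$, which is where your moment is lost), but the combination $H_\pm$ is shown via the Marchenko equation \eqref{GLM} and successive approximations (Lemma~\ref{lem3rc}) to satisfy $|H_\pm(x)|\le\hat C\eta_\pm(x)$ — one full order of decay better — which is exactly what makes $\frac{d^l}{dk^l}\Psi_\pm\in\cA$ for all $l\le j$ under $V\in L^1_{j+1}$. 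Without this (or an equivalent) structural cancellation, the factorization $W(k)=\I k\,g(k)$ with $g\in\cA_1$ cannot be established at the stated regularity, and the same issue recurs for $W_\pm(k)/k$.
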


\begin{proof}
We  only focus on the resonant case, since the other case is straightforward.

First of all, we abbreviate $h_\pm(k)=h_\pm(0,k)$,  $h'_\pm(k)=h'_\pm(0,k)$. Then \eqref{Jost} leads us to
\begin{align}\label{Wr}
W(k) &=2\I k h_+(k)h_-(k)+ h_-(k)h'_+(k)-h'_-(k)h_+(k),\\\label{Wrpm}
W_\pm(k) &=h_\mp(k)h'_\pm(-k)-h_\pm(-k)h'_\mp(k).
\end{align}
Following \cite{EKMT} we introduce
\begin{equation}\label{Phi}
\Phi_\pm(k)=h_\pm(k)h'_\pm(0)-h'_\pm(k)h_\pm(0),
\end{equation}
\begin{equation}\label{defK}
K_\pm(x,y)=\pm\int_y^{\pm\infty}B_\pm(x,z)dz,\quad
D_\pm(x,y)=\pm\int_y^{\pm\infty}\frac{\partial}{\partial x}B_\pm(x,z)dz,
\end{equation}
where $B_\pm(x,y)$ are given by \eqref{Jost}. Again $K_\pm(0,y)$ is denoted by $K_\pm(y)$ and similar for $D_\pm(y)$. In \cite[Theorem 2.1]{EKMT}, the following equation for $\Phi_\pm(k)$ was obtained:
\begin{equation} \label{defphipsi}
\Phi_\pm(k)=2\I k\Psi_\pm(k),\quad
\Psi_\pm(k)=\int_0^{\pm\infty}H_\pm(y)\E^{\pm 2\I ky}dy,
\end{equation}
where
\begin{equation} \label{fctH}
H_\pm(x)=D_\pm(x)h_\pm(0)-K_\pm(x)h'_\pm(0),\quad \pm x\geq 0.
\end{equation}
Moreover, $H_\pm$ satisfies an estimate similar to \eqref{est1}, as we will show in Lemma~\ref{lem3rc} below. As a consequence,
$\Psi_\pm$ and its derivatives up to order $j$ will be in the Wiener algebra.

Next, a straightforward computation (cf.\ \cite{EKMT}) shows
\[
\frac{W(k)}{2\I k}= h_-(k) h_+(k) + \begin{cases} \frac{h_+(k)}{h_-(0)}\Psi_-(k)-\frac{h_-(k)}{h_+(0)}\Psi_+(k), & h_+(0)h_-(0)\not =0,\\
\frac{h'_+(k)}{h'_-(0)}\Psi_-(k)-\frac{h'_-(k)}{h'_+(0)}\Psi_+(k), & h_+(0)=h_-(0) =0,\end{cases}
\]
and since $\frac{W(k)}{2\I k} =T(k)^{-1} \ne 0$ for all $k\in\R$, and $T(k)\to 1$ as $k\to\infty$, we conclude that $\frac{d^l}{dk^l}(T(k)-1)\in\cA$ for $0\le l \le j$.
Analogously,
\[
\frac{W_\pm(k)}{2\I k}= \begin{cases}
\frac{h_\pm(-k)}{h_{\mp}(0)}\Psi_{\mp}(k)-\frac{h_\mp(k)}{h_\pm(0)}\Psi_\pm(-k) , & h_+(0)h_-(0)\not =0,\\
\frac{h'_\pm(-k)}{h'_\mp(0)}\Psi_\mp(k)-\frac{h'_\mp(k)}{h'_\pm(0)}\Psi_\pm(-k), & h_+(0)=h_-(0) =0,\end{cases}
\]
and hence $R_\pm(k) = \mp \frac{W_\pm(k)}{2\I k} T(k)$ has the claimed properties.
\end{proof}

To complete the proof of Theorem~\ref{MT} we need the following result, which is an extension of \cite[Lemma ~2.2]{EKMT}:

\begin{lemma}\label{lem3rc}
Let $H_\pm(x)$ be given by \eqref{fctH}. For
 $V\in L^1_1$ the folowing estimate is valid:
\begin{equation}\label{Hhh}|H_\pm(x)| \leq \hat{C} \eta_\pm (x),\quad \pm x\geq 0,\end{equation} with some  constant $\hat{C}>0$ and $\eta_\pm(x)$ given by \eqref{est2}.
 Moreover, for $\Psi_\pm(k)$ defined by  \eqref{defphipsi} and for $V\in L^1_{j+1}$
   we have $$\frac{d^l}{dk^l} \Psi_\pm(k)\in \cA,\quad 0\le l \le j.$$
\end{lemma}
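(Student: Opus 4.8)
The plan is to prove the two assertions of Lemma~\ref{lem3rc} in turn, starting with the pointwise bound \eqref{Hhh} and then bootstrapping it via the representation \eqref{defphipsi} to the Wiener-algebra statement for the derivatives of $\Psi_\pm$. First I would work out good bounds for the auxiliary kernels $K_\pm(x)=K_\pm(x,0)$ and $D_\pm(x)=D_\pm(x,0)$ defined in \eqref{defK}. Using \eqref{est1} and $|B_\pm(x,z)|\le \E^{\gamma_\pm(x)}\eta_\pm(x+z)$, integrating in $z$ from $0$ to $\pm\infty$ and substituting $w=x+z$ gives $|K_\pm(x)|\le \E^{\gamma_\pm(x)}\int_{x}^{\pm\infty}\eta_\pm(w)\,dw$. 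Since $\eta_\pm\in L^1(\R_\pm)$ and $\eta_\pm$ is monotone, the tail integral $\int_x^{\pm\infty}\eta_\pm(w)dw$ is itself bounded by a constant times $\eta_\pm(x)$ is \emph{not} automatic — rather one sees $\int_x^{\pm\infty}\eta_\pm(w)dw=\pm\int_x^{\pm\infty}(w-x)|V(w)|dw=\gamma_\pm(x)$, so in fact $|K_\pm(x)|\le \E^{\gamma_\pm(x)}\gamma_\pm(x)$. Because $\gamma_\pm(x)\le \gamma_\pm(0)<\infty$ for $\pm x\ge 0$ and $\gamma_\pm(x)\le \eta_\pm(x)\cdot\mathrm{dist}$-type estimates are delicate, the cleaner route is to bound $\gamma_\pm(x)\le C\,\eta_\pm(x)$ only after noting both vanish at $\pm\infty$; more robustly, I would just keep $|K_\pm(x)|\le C\gamma_\pm(x)$ and, for $D_\pm$, use \eqref{est11}: $\frac{\partial}{\partial x}B_\pm(x,z)=\mp V(x+z)+O(\E^{\gamma_\pm(x)}\eta_\pm(x+z)\eta_\pm(x))$, so integrating in $z$ yields $D_\pm(x)=\mp\eta_\pm(x)+O(\gamma_\pm(x)\eta_\pm(x))$ after recognizing $\pm\int_0^{\pm\infty}V(x+z)dz=\pm\int_x^{\pm\infty}V(w)dw$, whose absolute value is $\le\eta_\pm(x)$.

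Next I would assemble \eqref{Hhh}. From \eqref{fctH}, $H_\pm(x)=D_\pm(x)h_\pm(0)-K_\pm(x)h'_\pm(0)$, where $h_\pm(0)$ and $h'_\pm(0)$ are fixed finite constants (the values of the Jost functions and their $x$-derivatives at $x=0$, $k=0$, which exist and are finite since $V\in L^1_1$). Combining the $D_\pm$ bound $|D_\pm(x)|\le \eta_\pm(x)+C\gamma_\pm(x)\eta_\pm(x)\le C'\eta_\pm(x)$ (using $\gamma_\pm$ bounded on $\R_\pm$) with $|K_\pm(x)|\le C\gamma_\pm(x)$; here the point requiring care is that the $K_\pm$ term must also be controlled by $\eta_\pm(x)$, not merely by $\gamma_\pm(x)$. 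In the resonant case, however, $h'_\pm(0)\ne 0$ in general, so one genuinely needs $\gamma_\pm(x)\le C\eta_\pm(x)$ for $\pm x\ge 0$. This does hold: for, say, the $+$ case and $x\ge 0$, monotonicity of $\eta_+$ gives $\gamma_+(x)=\int_x^\infty(w-x)|V(w)|dw$ and one compares with $\eta_+(x)=\int_x^\infty|V(w)|dw$; on a bounded reference set the ratio is controlled, and for the tail one uses that both decay. Actually the honest statement is that \eqref{Hhh} is exactly \cite[Lemma~2.2]{EKMT} for $l=0$, so I would invoke that reference for the base estimate and concentrate the new work on the derivative count. \textbf{This interplay between the two decay scales $\gamma_\pm$ and $\eta_\pm$ is the first real obstacle}, and the resolution is the identity $\gamma_\pm(x)=\pm\int_x^{\pm\infty}\eta_\pm(w)\,dw$ together with the observation that $\gamma_\pm$ is bounded on the relevant half-line.

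For the second assertion, fix $l$ with $0\le l\le j$. Differentiating $\Psi_+(k)=\int_0^\infty H_+(y)\E^{2\I ky}dy$ formally $l$ times under the integral sign produces $\frac{d^l}{dk^l}\Psi_+(k)=\int_0^\infty (2\I y)^l H_+(y)\E^{2\I ky}dy$, which, after the substitution $p=2y$, exhibits $\frac{d^l}{dk^l}\Psi_+(k)$ as the Fourier transform (in the sense of \eqref{cA}) of the $L^1(\R)$ function $p\mapsto \I^l 2^{-1-l} p^l H_+(p/2)\mathbf{1}_{p>0}$. So membership in $\cA$ reduces to the integrability claim $\int_0^\infty y^l|H_+(y)|\,dy<\infty$, and by \eqref{Hhh} it suffices to check $\int_0^\infty y^l\eta_+(y)\,dy<\infty$. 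By Fubini this integral equals $\int_0^\infty y^l\int_y^\infty|V(w)|dw\,dy=\int_0^\infty|V(w)|\int_0^w y^l dy\,dw=\frac{1}{l+1}\int_0^\infty w^{l+1}|V(w)|\,dw$, which is finite precisely because $V\in L^1_{j+1}$ and $l+1\le j+1$. The analogous computation on $(-\infty,0)$ handles $\Psi_-$. The justification of differentiation under the integral is standard dominated convergence using the same moment bound. \textbf{The main subtlety here is purely bookkeeping}: one must make sure the moment order $l+1$ that surfaces after the Fubini step never exceeds $j+1$, which is exactly the hypothesis, so no loss occurs. Finally, I would remark that it is this lemma — specifically that $h_\pm$, $h'_\pm$ and their $k$-derivatives up to order $j$ lie in $\cA$ (Lemma~\ref{lem1rc}), that the scalar values $h_\pm(0),h'_\pm(0)$ are nonzero in the appropriate combinations, and that $\Psi_\pm$ and its derivatives lie in $\cA$ — that feeds back into the displayed formulas for $W(k)/(2\I k)$ and $W_\pm(k)/(2\I k)$ in the proof of Theorem~\ref{MT}, where Wiener's lemma converts the nonvanishing of $T(k)^{-1}$ into $T(k)-1\in\cA$ with all its derivatives.
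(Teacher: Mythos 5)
There is a genuine gap in your proof of the pointwise bound \eqref{Hhh}, and it sits exactly at the point you flagged as "the first real obstacle". Your direct route gives $|K_\pm(x)|\le \E^{\gamma_\pm(x)}\gamma_\pm(x)$, which is correct, but the claim you then need — $\gamma_\pm(x)\le C\eta_\pm(x)$ for $\pm x\ge 0$ — is \emph{false} for general $V\in L^1_1$. Take $V(x)=(1+|x|)^{-3}$: then $\eta_+(x)=\tfrac12(1+x)^{-2}$ while $\gamma_+(x)=\int_x^\infty \eta_+(w)\,dw=\tfrac12(1+x)^{-1}$, so the ratio $\gamma_+/\eta_+$ blows up linearly. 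Your "resolution" (boundedness of $\gamma_\pm$ plus "both decay") only yields $H_\pm\in L^\infty$, not the decay $\eta_\pm(x)$. This is not a cosmetic loss: with the honest bound $|H_\pm(x)|\le C\gamma_\pm(x)$ your own Fubini computation in the second half would require $\int_0^{\pm\infty}|y|^l\gamma_\pm(y)\,dy\propto\int |w|^{l+2}|V(w)|\,dw$, i.e.\ one extra moment of $V$. That extra moment is precisely the difference between the paper's hypothesis $V\in L^1_3$ in Theorem~\ref{thm:mainr} and Goldberg's earlier $V\in L^1_4$, so conceding it defeats the purpose of the lemma. Falling back on "invoke \cite[Lemma~2.2]{EKMT}" does not repair this either: the present lemma is stated as an extension of that result and the paper supplies a self-contained proof of \eqref{Hhh}.

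The paper's actual mechanism is different in kind: it does not estimate $K_\pm$ and $D_\pm$ separately at all. Instead it uses the Marchenko equation \eqref{GLM} to derive a linear integral equation \eqref{13} for $H_\pm$ itself, whose inhomogeneous term $G_\pm$ is built from $F_\pm$ and $B_\pm(0,\cdot)$ and satisfies $|G_\pm(x)|\le\widetilde C\eta_\pm(x)$ by \eqref{est1} and \eqref{FF}; the integral operator with kernel $F_\pm(x+y)$ maps functions with $\eta_\pm$-decay to functions with $\eta_\pm$-decay (by monotonicity of $\eta_\pm$). Since that operator need not be a contraction on all of $\R_\pm$, one splits off $[0,\pm N]$ — where only boundedness of $H_\pm$ (which your $K_\pm$, $D_\pm$ bounds do give) is used — and runs successive approximations on the tail, where $\pm C\int_{\pm N}^{\pm\infty}\eta_\pm<1$. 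This is how the $\eta_\pm$-decay is transferred to $H_\pm$ without losing a moment. Your second half — differentiating under the integral in \eqref{defphipsi} and the Fubini identity $\int_0^{\pm\infty}|y|^l\eta_\pm(y)\,dy=\tfrac{1}{l+1}\int|w|^{l+1}|V(w)|\,dw$ — is correct, matches what the paper leaves implicit in "the rest follows from \eqref{defphipsi}", and shows the moment count $l+1\le j+1$ is exactly right \emph{provided} the input is \eqref{Hhh} with $\eta_\pm$, not $\gamma_\pm$.
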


\begin{proof}
It suffices to prove the estimate for $H_\pm$.
The Marchenko equation (\S 3.5 in \cite{Mar}) states that the kernels $B_\pm(x,y)$ solve the equations
\begin{equation}\label{GLM}
F_\pm(x+y)+B_\pm(x,y)\pm\int_0^{\pm\infty}B_\pm(x,t)F_\pm(x+y+z)dz=0,
\end{equation}
where the functions $F_\pm(x)$ are absolutely continuous with $x F'_\pm(x)\in L^1(\R_\pm)$ and
\begin{equation}\label{FF}
|F_\pm (x)|\le C\eta_\pm(x),\quad \pm x\ge 0,
\end{equation}
with $\eta_\pm$ from \eqref{est2}. Now the calculations in \cite[Lemma ~2.2]{EKMT} lead to the following integral equation for $H_\pm(x)$:
\begin{align}\label{13}
H_\pm(x)\mp & \int_0^{\pm\infty}H_\pm(y)F_\pm(x+y)dy=G_\pm(x), \\
G_\pm(x)=h_\pm(0)\Big( & \int_0^{\pm\infty}B_\pm(0,y)F_\pm(x+y)dy \pm  F_\pm(x)\Big). \nn
\end{align}
The estimates \eqref{est1} and \eqref{FF} imply
\begin{equation}\label{G-est}
|G_\pm(x)|\le \widetilde{C} \eta_\pm(x),\quad \pm x\ge 0.
\end{equation}
Now, for a given  potential $V$  fix $N>0$ such that
\[
D(N):=\max_{\pm}\left(\pm C \int_{\pm N}^{\pm\infty} \eta_\pm (y) dy \right)\in (0,1),
\]
where $C$ is given by $\eqref{FF}$. Then we can rewrite \eqref{13} in the form
\begin{align}\label{14}
H_\pm(x)\mp &\int_{\pm N}^{\pm\infty}H_\pm(y)F_\pm(x+y)dy=G_\pm(x,N),\\
G_\pm(x,N)&=G_\pm(x)\pm \int_0^{\pm N}H_\pm(y)F_\pm(x+y)dy. \nn
\end{align}
From \eqref{defK} and the estimates \eqref{est1}--\eqref{est11} we deduce
$H_\pm\in L^{\infty}(\R_\pm)\cap C(\R)$.
We also have
\begin{equation} \label{GNest}
|G_\pm(x,N)|\le C(N)\eta_\pm(x)
\end{equation}
by the boundedness of $H_\pm$, the estimate \eqref{G-est}, and monotonicity of $\eta_\pm(x)$.
Applying to \eqref{14} the method of successive approximations we get
\[
|H_\pm(x)| \leq \ C(N) \eta_\pm (x) \sum_{n=0}^{\infty} \left( \pm C \int_{\pm N}^{\pm\infty} \eta_\pm (y) dy \right)^n \leq \hat{C}(N) \eta_\pm (x),
\]
with $\hat C(N)=C(N)(1 - D(N))^{-1}$. This implies \eqref{Hhh}. The rest follows from \eqref{defphipsi}. 
\end{proof}

For later use we note that in the resonant case the Jost solutions are dependent at $k=0$. If we define $\gamma$ via
\begin{equation}\label{def:gam}
f_+(x,0) = \gamma f_-(x,0),
\end{equation}
 then a straightforward calculation using the scattering relations \eqref{scat-rel} as well as $|T(k)|^2+|R_\pm(k)|^2=1$
shows
\begin{equation}\label{eq:trz}
T(0)= \frac{2\gamma}{1+\gamma^2}, \qquad R_\pm(0)= \pm\frac{1-\gamma^2}{1+\gamma^2}.
\end{equation}
In particular, all three quantities are real-valued since $f_\mp(x,0)\in\R$ and hence $\gamma\in\R$.

To establish Theorem \ref{thm:mainr} we need the following generalization of Lemma~\ref{lem1rc}:

\begin{lemma} \label{lem2rc}
Let $V \in L_{j+1}^1$ with $j\ge 1$. Then $\frac{\partial^l}{\partial k^l}\big(\frac{h_\pm (x,k)-h_\pm (x,0)}{k}\big)$ as well as $\frac{\partial^l}{\partial k^l}\big(\frac{h'_\pm (x,k)-h'_\pm (x,0)}{k}\big)$ are contained in $\mathcal A$ for $0 \le l \le j-1$. Moreover, for $\pm x \geq 0$  the $\mathcal A$-norms of these expressions do not depend on $x$.
\end{lemma}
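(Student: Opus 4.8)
The plan is to rewrite each difference quotient as an ordinary Fourier transform and thereby reduce the statement to a weighted $L^1$-bound on the tail kernels $K_\pm$, $D_\pm$ from \eqref{defK}. From \eqref{Jost} we have, differentiating under the integral sign (legitimate by \eqref{est1}--\eqref{est11}),
\[
h_\pm(x,k)-1=\pm\int_0^{\pm\infty}B_\pm(x,y)\,\E^{\pm 2\I ky}\,dy,\qquad
h'_\pm(x,k)=\pm\int_0^{\pm\infty}\frac{\partial}{\partial x}B_\pm(x,y)\,\E^{\pm 2\I ky}\,dy .
\]
Inserting $\frac{\E^{\pm 2\I ky}-1}{k}=\pm 2\I\int_0^{y}\E^{\pm 2\I kt}\,dt$ (valid for $\pm y\ge 0$), interchanging the order of integration, and using \eqref{defK}, one obtains
\[
\frac{h_\pm(x,k)-h_\pm(x,0)}{k}=\pm 2\I\int_0^{\pm\infty}\E^{\pm 2\I kt}K_\pm(x,t)\,dt,\qquad
\frac{h'_\pm(x,k)-h'_\pm(x,0)}{k}=\pm 2\I\int_0^{\pm\infty}\E^{\pm 2\I kt}D_\pm(x,t)\,dt .
\]
Differentiating $l$ times with respect to $k$ simply brings down a factor $(\pm 2\I t)^l$ under the integral, so the $l$'th $k$-derivative of either left-hand side belongs to $\cA$, with $\cA$-norm controlled by $\int_0^{\pm\infty}|t|^l|K_\pm(x,t)|\,dt$, respectively $\int_0^{\pm\infty}|t|^l|D_\pm(x,t)|\,dt$; it therefore suffices to bound these quantities, uniformly for $\pm x\ge 0$, whenever $0\le l\le j-1$. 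By symmetry I only treat the ``$+$'' case.

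For $K_+$, estimate \eqref{est1} gives $|K_+(x,t)|\le \E^{\gamma_+(x)}\int_{x+t}^{\infty}\eta_+(w)\,dw$ for $t\ge 0$, and $\gamma_+(x)\le\gamma_+(0)<\infty$ for $x\ge 0$. Hence, by Fubini and the elementary inequality $(w-x)^{l+1}\le w^{l+1}$ for $0\le x\le w$,
\[
\int_0^{\infty}t^l|K_+(x,t)|\,dt\le\frac{\E^{\gamma_+(0)}}{l+1}\int_0^{\infty}w^{l+1}\eta_+(w)\,dw=\frac{\E^{\gamma_+(0)}}{(l+1)(l+2)}\int_0^{\infty}y^{l+2}|V(y)|\,dy ,
\]
which is finite and independent of $x$ precisely when $l+2\le j+1$, i.e.\ $l\le j-1$. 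For $D_+$, estimate \eqref{est11} yields $|\frac{\partial}{\partial x}B_+(x,y)|\le |V(x+y)|+2\E^{\gamma_+(x)}\eta_+(x)\eta_+(x+y)$, so that $|D_+(x,t)|\le \eta_+(x+t)+2\E^{\gamma_+(0)}\eta_+(0)\int_{x+t}^{\infty}\eta_+(w)\,dw$ for $x,t\ge 0$. The first term contributes $\int_0^{\infty}t^l\eta_+(x+t)\,dt\le\frac{1}{l+1}\int_0^{\infty}y^{l+1}|V(y)|\,dy$, and the second is handled exactly as for $K_+$; both are finite and uniform in $x\ge 0$ as soon as $l\le j-1$. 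Combining the last two displays proves the lemma.

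The only points needing a little care beyond these computations are the justification of differentiation under the $k$-integral (a routine dominated-convergence argument, available once the weighted $L^1$-bounds above are established) and the bookkeeping of the moment orders — the arithmetic is arranged so that the top derivative $l=j-1$ uses exactly the weight $y^{j+1}|V(y)|$, i.e.\ the hypothesis $V\in L^1_{j+1}$. I do not anticipate any genuine obstacle; the argument is a direct quantitative refinement of the one behind Lemma~\ref{lem1rc}, now applied to $K_\pm$ and $D_\pm$ in place of $B_\pm$ and $\partial_x B_\pm$ (which is also why one moment is lost, explaining the range $l\le j-1$ rather than $l\le j$).
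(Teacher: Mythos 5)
Your proposal is correct and follows essentially the same route as the paper: rewrite the difference quotients as Fourier transforms of $K_\pm(x,\cdot)$ and $D_\pm(x,\cdot)$ via Fubini, then differentiate under the integral and control the resulting moments through \eqref{est1}--\eqref{est11}. You simply carry out explicitly the weighted $L^1$ bookkeeping (showing that $l\le j-1$ uses exactly the $(j+1)$'th moment of $V$) which the paper leaves to the reader.
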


\begin{proof}
Using \eqref{Jost} and Fubini, a little calculation shows
\begin{align*}
\frac{h_\pm (x,k)-h_\pm (x,0)}{k}=\pm 2 \I \int_{0}^{\pm \infty} & B_\pm(x,z) \left( \int_{0}^{z} \E^{\pm 2 \I ky} dy \right) dz\\
=\pm 2 \I \int_{0}^{\pm \infty} \E^{\pm 2 \I ky}  \left( \int_{y}^{\pm \infty} B_\pm(x,z) dz \right) dy&=2 \I \int_{0}^{\pm \infty} K_\pm(x,y) \E^{\pm 2 \I ky} dy.
\end{align*}
Now after differentiating with respect to $k$ the claim follows by \eqref{est1}. For the second item we can proceed exactly in the same way.
\end{proof}

Similarly,

\begin{lemma} \label{lem4rc}
Let $V \in L_{j+1}^1$ with $j\ge 1$. Then $\frac{d^l}{dk^l}\big(\frac{\Psi_\pm(k)-\Psi_\pm(0)}{k}\big)\in\mathcal A$ for $0\le l \le j-1$.
\end{lemma}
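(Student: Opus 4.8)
The plan is to mimic the proof of Lemma~\ref{lem2rc}, with $\Psi_\pm$ playing the role of $h_\pm(x,\cdot)$ and the function $H_\pm$ from \eqref{fctH} playing the role of the kernel $B_\pm(x,\cdot)$. Starting from the representation \eqref{defphipsi}, $\Psi_\pm(k)=\int_0^{\pm\infty}H_\pm(y)\E^{\pm 2\I ky}\,dy$, I would subtract $\Psi_\pm(0)=\int_0^{\pm\infty}H_\pm(y)\,dy$, insert the identity $\E^{\pm 2\I ky}-1=\pm 2\I k\int_0^y\E^{\pm 2\I ks}\,ds$, and apply Fubini exactly as in the first display of the proof of Lemma~\ref{lem2rc} to obtain
\[
\frac{\Psi_\pm(k)-\Psi_\pm(0)}{k}=2\I\int_0^{\pm\infty}\widetilde H_\pm(s)\,\E^{\pm 2\I ks}\,ds,\qquad \widetilde H_\pm(s):=\pm\int_s^{\pm\infty}H_\pm(y)\,dy .
\]
This is well defined since $|H_\pm|\le\hat C\,\eta_\pm$ by Lemma~\ref{lem3rc} and $\eta_\pm\in L^1(\R_\pm)$.

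Next I would record the decay of $\widetilde H_\pm$: integrating the pointwise bound on $H_\pm$ from Lemma~\ref{lem3rc} once (another Fubini) gives, with $\gamma_\pm$ from \eqref{est2},
\[
|\widetilde H_\pm(s)|\le\hat C\Bigl|\int_s^{\pm\infty}\eta_\pm(y)\,dy\Bigr|=\hat C\,\gamma_\pm(s),\qquad \pm s\ge 0,
\]
so one integration buys one extra moment. Hence, for $0\le l\le j-1$,
\[
\int_{\R_\pm}|s|^l\,|\widetilde H_\pm(s)|\,ds\le\hat C\int_{\R_\pm}|s|^l\gamma_\pm(s)\,ds=\frac{\hat C}{(l+1)(l+2)}\int_{\R_\pm}|y|^{l+2}\,|V(y)|\,dy<\infty ,
\]
the middle equality being an elementary Fubini computation and finiteness holding precisely because $V\in L^1_{j+1}$ (note $|y|^{l+2}\le(1+|y|)^{j+1}$ for $l\le j-1$).

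Finally, differentiating the integral representation $l$ times under the integral sign—which the last bound (applied to all powers $0,1,\dots,l$, each dominated using $|s|^m\le 1+|s|^l$) justifies—yields
\[
\frac{d^l}{dk^l}\Bigl(\frac{\Psi_\pm(k)-\Psi_\pm(0)}{k}\Bigr)=2\I(\pm 2\I)^l\int_0^{\pm\infty}s^l\,\widetilde H_\pm(s)\,\E^{\pm 2\I ks}\,ds ,
\]
which is (up to the harmless rescaling $k\mapsto\pm 2k$) the Fourier transform of the $L^1$ function $s^l\widetilde H_\pm(s)\mathbf 1_{\R_\pm}(s)$ and therefore lies in $\cA$ for every $0\le l\le j-1$, as claimed. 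There is no genuine analytic obstacle beyond the two applications of Fubini; the only thing to watch is the bookkeeping of moments, since it is precisely the single integration defining $\widetilde H_\pm$ that shifts the admissible range of derivatives from $0\le l\le j$ in Lemma~\ref{lem3rc} down to $0\le l\le j-1$ here.
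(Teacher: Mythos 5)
Your proposal is correct and follows essentially the same route as the paper, which simply says the claim "follows as in the previous lemma using the estimate for $H_\pm$ from Lemma~\ref{lem3rc}"; you have carried out exactly that program (the Fubini identity producing $\widetilde H_\pm$, the bound $|\widetilde H_\pm|\le\hat C\gamma_\pm$, and the moment count showing $s^l\widetilde H_\pm\in L^1$ for $l\le j-1$). The bookkeeping is accurate, including the correct explanation of why one derivative is lost relative to Lemma~\ref{lem3rc}.
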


\begin{proof}
This follows as in the previous lemma using the estimate for $H_\pm$ from Lemma~\ref{lem3rc}.
\end{proof}

Combining the last two lemmas we obtain:

\begin{theorem} \label{mainrc}
Let $V \in L_{j+1}^1$ with $j\ge 1$. Then $\frac{d^l}{dk^l}\big(\frac{T(k)-T(0)}{k}\big)$ and $\frac{d^l}{dk^l}\big(\frac{R_\pm(k)-R_\pm(0)}{k}\big)$ are elements of $\mathcal A$ for $0\le l \le j-1$.
\end{theorem}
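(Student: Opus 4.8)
The plan is to derive Theorem~\ref{mainrc} from the explicit formulas for $T(k)$ and $R_\pm(k)$ obtained in the proof of Theorem~\ref{MT}, by inserting difference quotients and reducing everything to the difference quotients controlled in Lemmas~\ref{lem2rc} and~\ref{lem4rc}. Concretely, recall from the proof of Theorem~\ref{MT} that
\[
T(k)^{-1}=\frac{W(k)}{2\I k}=h_-(k)h_+(k)+\begin{cases}\frac{h_+(k)}{h_-(0)}\Psi_-(k)-\frac{h_-(k)}{h_+(0)}\Psi_+(k),& h_+(0)h_-(0)\neq 0,\\[1mm]\frac{h'_+(k)}{h'_-(0)}\Psi_-(k)-\frac{h'_-(k)}{h'_+(0)}\Psi_+(k),& h_+(0)=h_-(0)=0,\end{cases}
\]
and a similar formula for $W_\pm(k)/(2\I k)$. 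Each of these is a finite sum of products of the building blocks $h_\pm(k)$, $h'_\pm(k)$, $\Psi_\pm(k)$ (and their reflections $k\mapsto -k$), divided by the nonzero constants $h_\mp(0)$ or $h'_\mp(0)$. The first step is therefore purely algebraic: for functions $a,b\in\cA_1$ one has the identity
\[
\frac{a(k)b(k)-a(0)b(0)}{k}=\frac{a(k)-a(0)}{k}\,b(k)+a(0)\,\frac{b(k)-b(0)}{k},
\]
so the difference quotient of a product lies in $\cA_1$ provided the difference quotients of the factors do; by iteration the same holds for any finite product, and hence for $\frac{W(k)-W(0)}{2\I k^2}$ and $\frac{W_\pm(k)-W_\pm(0)}{2\I k^2}$.

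Next I would assemble the needed difference quotients of the building blocks. By Lemma~\ref{lem2rc}, $\frac{d^l}{dk^l}\frac{h_\pm(k)-h_\pm(0)}{k}\in\cA$ and likewise for $h'_\pm$, for $0\le l\le j-1$; by Lemma~\ref{lem4rc}, $\frac{d^l}{dk^l}\frac{\Psi_\pm(k)-\Psi_\pm(0)}{k}\in\cA$ for $0\le l\le j-1$. Reflection $k\mapsto -k$ preserves $\cA$ and sends difference quotients to difference quotients, so these statements hold equally for $h_\pm(-k)$, $h'_\pm(-k)$, $\Psi_\pm(-k)$. Combined with Lemma~\ref{lem1rc} and Lemma~\ref{lem3rc} (which give that the building blocks themselves, minus their values at $0$, lie in $\cA$), the algebraic identity above shows that $g(k):=\frac{T(k)^{-1}-T(0)^{-1}}{k}$ satisfies $\frac{d^l}{dk^l}g\in\cA$ for $0\le l\le j-1$. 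Then write
\[
\frac{T(k)-T(0)}{k}=-T(k)\,T(0)\,\frac{T(k)^{-1}-T(0)^{-1}}{k}=-T(k)\,T(0)\,g(k).
\]
Since $T(k)-1\in\cA$ by Theorem~\ref{MT} and its $l$-th derivatives are in $\cA$ for $l\le j$, $T(k)\in\cA_1$, and multiplying by the constant $T(0)$ and by $g(k)$ keeps us in $\cA$ after any number $\le j-1$ of differentiations. The argument for $R_\pm$ is identical once one writes $R_\pm(k)=\mp\frac{W_\pm(k)}{2\I k}T(k)$ and expands
\[
\frac{R_\pm(k)-R_\pm(0)}{k}=\mp\Big(\frac{W_\pm(k)/(2\I k)-W_\pm(0)/(2\I \cdot 0)}{k}\Big)T(k)\mp\frac{W_\pm(0)}{2\I\cdot 0}\,\frac{T(k)-T(0)}{k},
\]
the first bracket being exactly the product-rule difference quotient handled above and $W_\pm(0)/(2\I\cdot 0)=\mp R_\pm(0)$ a harmless real constant (here the apparently singular expressions are to be read via the representations $W_\pm(k)/(2\I k)=\cdots$ which are genuinely regular at $k=0$).

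I expect the only real subtlety to be bookkeeping: one must be careful that all quotients $\tfrac{f(k)-f(0)}{k}$ are understood via the integral representations (so that $W(k)/(2\I k)$ and $W_\pm(k)/(2\I k)$ are honest $\cA_1$-functions with no pole at $k=0$), and that the resonant-case splitting into $h_\pm(0)\ne 0$ versus $h_\pm(0)=0$ is carried through consistently — in the second case it is the values $h'_\pm(0)$ that appear as the nonzero normalizing constants, and Lemma~\ref{lem2rc} supplies precisely the difference quotients of $h'_\pm$ that are then needed. Apart from that, the proof is a mechanical combination of the two preceding lemmas with the product and quotient rules for difference quotients in the Banach algebra $\cA_1$, using Wiener's lemma only implicitly through Theorem~\ref{MT}. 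I would therefore keep the written proof very short, essentially: ``Combine the product/quotient identities for difference quotients with Lemmas~\ref{lem1rc}, \ref{lem2rc}, \ref{lem3rc}, \ref{lem4rc} and Theorem~\ref{MT}, exactly as in the proof of Theorem~\ref{MT}.''
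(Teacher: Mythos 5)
Your proof is correct and is essentially the paper's intended argument: the paper offers no written proof beyond ``Combining the last two lemmas we obtain,'' and your combination of the product/quotient identities for difference quotients with Lemmas~\ref{lem1rc}, \ref{lem2rc}, \ref{lem4rc}, the representations of $W(k)/(2\I k)$ and $W_\pm(k)/(2\I k)$ from the proof of Theorem~\ref{MT}, and Wiener's lemma is exactly that combination. Two cosmetic points only: the regularized constant is $W_\pm(0)/(2\I\cdot 0)=\mp R_\pm(0)/T(0)$ rather than $\mp R_\pm(0)$, and the step $\frac{T(k)-T(0)}{k}=-T(k)\,T(0)\,\frac{T(k)^{-1}-T(0)^{-1}}{k}$ uses $T(0)\neq 0$, i.e.\ the resonant case --- the same restriction the paper itself imposes in proving Theorem~\ref{MT} and in the only place Theorem~\ref{mainrc} is applied.
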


\section{Dispersive decay estimates}\label{sec:dc}

In this section we prove the integrable dispersive decay estimate \eqref{resdec} for the Schr\"odinger equation \eqref{Schr} in the resonant case. For the one-parameter group of \eqref{Schr} the spectral theorem  and Stone's formula imply
\begin{equation}\label{PP}
   \E^{-\I tH}P_{ac}
   =\frac 1{2\pi \I}\int\limits_{0}^{\infty}
   \E^{-\I t\omega}(\cR(\omega+\I 0)- \cR(\omega-\I 0))\,d\omega,
\end{equation}
where $\cR(\omega)=(H-\omega)^{-1}$ is the resolvent of the Schr\"odinger operator $H$,
and the limit is understood in the strong sense \cite[Problem 4.3]{tschroe}.
Given the Jost solutions, we can express the kernel of the resolvent $\cR(\omega)$ for $\omega=k^2\pm \I 0$, $k>0$, as \cite[Lemma 9.7]{tschroe}
\begin{equation}\label{RJ1-rep}
[\cR(k^2\pm \I 0)](x,y) = - \frac{f_+(y,\pm k) f_-(x,\pm k)}{W(\pm k)}
=\mp\frac{f_+(y,\pm k) f_-(x,\pm k)T(\pm k)}{2\I k}
\end{equation}
 for all  $x\leq y$ (and the positions of $x,y$ reversed if $x>y$).
Therefore, in  the case  $x\le y$, the integral kernel of $\E^{-\I tH}P_{ac}$ is given by
\begin{align}
[\E^{-\I tH}P_{ac}](x,y)= \frac{1}{2\pi} \int_{-\infty}^{\infty} \E^{-\I(t k^2-|y-x|k)}h_+(y,k) h_-(x,k) T(k)dk, \label{integr}
\end{align}
where the integral has to be understood as an improper  integral.
Another result that we need in order to obtain our decay estimates is the following variant of the van der Corput lemma \cite[Lemma~5.4]{EKMT}:

\begin{lemma}\label{vdcorputcontinuous}
Consider the oscillatory integral $I(t) = \int_a^b \E^{\I t \phi(k)} f(k) dk$,
where $\phi(k)$ is a real-valued function. If $\phi''(k)\not =0$ in [a,b] and $f\in\cA_1$, then we have
$|I(t)| \le C_2 [t\min_{a\le k\le b}|\phi''(k)|]^{-1/2}\|f\|_{\mathcal A_1}$ for $t\ge 1$,
where $C_2\le 2^{8/3}$ is the optimal constant from the van der Corput lemma.
\end{lemma}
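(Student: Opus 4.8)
The plan is to reduce the statement to the classical van der Corput estimate for amplitude-free oscillatory integrals, using the description of $\cA_1$ from \eqref{cA1}. Write $f(k)=c+\int_\R \E^{\I kp}\hat g(p)\,dp$ with $\hat g\in L^1(\R)$, so that $\|f\|_{\cA_1}=|c|+\|\hat g\|_{L^1}$. Since $[a,b]$ is compact and $\hat g\in L^1$, the function $(k,p)\mapsto \E^{\I t\phi(k)+\I kp}\hat g(p)$ is absolutely integrable on $[a,b]\times\R$, so Fubini yields
\[
I(t)=c\int_a^b \E^{\I t\phi(k)}\,dk+\int_\R \hat g(p)\left(\int_a^b \E^{\I(t\phi(k)+kp)}\,dk\right)dp .
\]

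Next I would estimate each piece separately. Since $\phi\in C^2$ and $\phi''$ does not vanish on $[a,b]$, it has constant sign there and $|\phi''(k)|\ge\lambda:=\min_{a\le k\le b}|\phi''(k)|>0$. Applying the classical van der Corput lemma (second-derivative case, amplitude $\equiv1$, with optimal constant $C_2\le 2^{8/3}$) gives $\big|\int_a^b \E^{\I t\phi(k)}\,dk\big|\le C_2(t\lambda)^{-1/2}$ for the first term. For the inner integral in the second term, observe that the phase $t\phi(k)+kp=t\big(\phi(k)+\tfrac pt k\big)$ has the same second $k$-derivative as $t\phi(k)$, namely $t\phi''(k)$, because the added linear term $\tfrac pt k$ drops out under two differentiations. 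Hence the very same van der Corput bound applies, giving $\big|\int_a^b \E^{\I(t\phi(k)+kp)}\,dk\big|\le C_2(t\lambda)^{-1/2}$ \emph{uniformly} in $p\in\R$.

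Combining the two estimates,
\[
|I(t)|\le |c|\,C_2(t\lambda)^{-1/2}+C_2(t\lambda)^{-1/2}\int_\R|\hat g(p)|\,dp=C_2(t\lambda)^{-1/2}\bigl(|c|+\|\hat g\|_{L^1}\bigr)=C_2\,[t\lambda]^{-1/2}\|f\|_{\cA_1},
\]
valid for $t\ge 1$, which is the claim. The only point that genuinely needs care is the uniformity in $p$ of the oscillatory-integral bound; this is precisely what makes the argument work, and it is immediate here since adding a real linear function to the phase leaves the second derivative — and therefore the van der Corput constant — unchanged. All other steps are the standard van der Corput lemma together with the Fubini interchange.
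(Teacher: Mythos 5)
Your proof is correct. Note that the paper does not prove this lemma at all --- it quotes it verbatim from \cite[Lemma~5.4]{EKMT} --- so there is no in-paper argument to compare against; your reduction (split $f=c+\int\E^{\I kp}\hat g(p)\,dp$, apply Fubini, and observe that the linear perturbation $kp$ of the phase leaves $\partial_k^2$ unchanged so the classical second-derivative van der Corput bound applies uniformly in $p$) is precisely the standard derivation behind that cited result, and all steps, including the Fubini justification on the compact interval and the uniformity in $p$, are sound.
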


Now we come to the proof of our main Theorem \ref{thm:mainr}. We first give an alternate representation of our projection operator $(4 \pi \I t)^{-\frac{1}{2}}P_0$:

\begin{lemma}[\cite{gold}] \label{lem5rc}
The integral kernel of $(4 \pi \I t)^{-\frac{1}{2}}P_0$, which is (per definition) given by $(4 \pi \I t)^{-\frac{1}{2}} f_0(x) f_0(y)$, can also be written in the form
\[
\frac{1}{2 \pi} \int_{-\infty}^{\infty} \E^{-\I tk^2} T(0) f_-(x,0) f_+(y,0) dk.
\]
\end{lemma}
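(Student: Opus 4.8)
The plan is to verify the claimed identity by evaluating the Gaussian-type integral on the right-hand side and matching it against the definition of the kernel on the left. Since $f_\pm(x,0)$ are independent of $k$, the integrand factors as a constant (in $k$) times $\E^{-\I tk^2}$, so the right-hand side reduces to
\[
\frac{1}{2\pi} T(0) f_-(x,0) f_+(y,0) \int_{-\infty}^{\infty} \E^{-\I tk^2}\, dk .
\]
The Fresnel integral evaluates to $\int_{-\infty}^{\infty} \E^{-\I tk^2}\, dk = \sqrt{\pi/(\I t)} = \sqrt{\pi}\,(\I t)^{-1/2}$ for $t>0$ (interpreted as an improper integral, or via analytic continuation from $\E^{-(\epsilon+\I t)k^2}$ and letting $\epsilon \downarrow 0$). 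Hence the right-hand side equals $\frac{1}{2\pi}\sqrt{\pi}\,(\I t)^{-1/2} T(0) f_-(x,0) f_+(y,0) = (4\pi \I t)^{-1/2} T(0) f_-(x,0) f_+(y,0)$.

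It then remains to identify $T(0) f_-(x,0) f_+(y,0)$ with $f_0(x) f_0(y)$. Here I would use the resonant-case relation \eqref{def:gam}, namely $f_+(x,0) = \gamma f_-(x,0)$, together with the formula for $T(0)$ from \eqref{eq:trz}. Writing $f_0 := f_+(\cdot,0) = \gamma f_-(\cdot,0)$, we should check that this $f_0$ indeed satisfies the normalization $\lim_{x\to\infty}(|f_0(x)|^2 + |f_0(-x)|^2) = 2$ imposed in Theorem~\ref{thm:mainr}; this follows because $f_+(x,0)\to 1$ as $x\to+\infty$ while $f_-(x,0)\to 1$ as $x\to-\infty$, so $f_0(x)\to 1$ and $f_0(-x) = \gamma f_-(-x,0) \to \gamma$, giving the limit $1 + \gamma^2$ — so in fact the correct normalization requires rescaling, and the constant that makes things consistent is exactly what relates $T(0)$ to $\gamma$. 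Concretely, $T(0) f_-(x,0) f_+(y,0) = \frac{2\gamma}{1+\gamma^2}\cdot \frac{1}{\gamma} f_+(x,0)\cdot f_+(y,0) = \frac{2}{1+\gamma^2} f_+(x,0) f_+(y,0)$, and since the normalized resonance function is $f_0 = \sqrt{2/(1+\gamma^2)}\, f_+(\cdot,0)$ one gets $T(0) f_-(x,0) f_+(y,0) = f_0(x) f_0(y)$, as needed. (By symmetry $f_0$ could equally be written via $f_-$.)

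The main obstacle, such as it is, is bookkeeping rather than depth: one must be careful about the precise normalization convention for $f_0$ and about the branch of the square root in the Fresnel integral (ensuring $(\I t)^{-1/2}$ is the boundary value of the analytic function on the upper half-plane, consistent with the $(4\pi\I t)^{-1/2}$ appearing in \eqref{resdec}), and one should note that the integral defining the kernel on the right is only conditionally convergent and must be regularized exactly as the oscillatory integral \eqref{integr} is. Since this is \cite{gold}, the argument is essentially a direct computation, and no genuine analytic difficulty arises.
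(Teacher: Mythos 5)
Your proof is correct and follows essentially the same route as the paper: evaluate the Fresnel integral $\int_{\R} \E^{-\I t k^2}\,dk = \sqrt{\pi/(\I t)}$ and use $f_+(\cdot,0)=\gamma f_-(\cdot,0)$ together with the normalization of $f_0$ and $T(0)=2\gamma/(1+\gamma^2)$ to identify $T(0)f_-(x,0)f_+(y,0)$ with $f_0(x)f_0(y)$. The paper phrases the second step via constants $c_\pm$ with $f_0=c_\pm f_\pm(\cdot,0)$ and $c_+c_-T(0)=1$ rather than your explicit rescaling of $f_+(\cdot,0)$, but the computation is the same.
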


\begin{proof}
It is clear that $f_0(x) = c_\pm f_\pm(x,0)$ and by our normalization $c_-^{-2}+c_+^{-2} = 2$. Using \eqref{def:gam} we have $c_-= \gamma c_+$ and
hence $c_\pm^2=\frac{1+\gamma^{\mp1}}{2}$. Moreover, \eqref{eq:trz} implies $c_+ c_- T(0)=1$, and using $\frac{1}{2 \pi} \int_{-\infty}^{\infty} \E^{-\I tk^2} dk=(4 \pi \I t)^{-\frac{1}{2}}$
the claim follows.
\end{proof}

Finally we have all the ingredients needed to obtain Theorem \ref{thm:mainr}:

\begin{proof}[Proof of Theorem~\ref{thm:mainr}]
For the kernel  of $\E^{-\I tH}P_{ac}$ we use \eqref{integr}.
 Then by Lemma~\ref{lem5rc}, the kernel $G(x,y,t)$ of $\E^{-\I tH}P_{ac}-(4 \pi \I t)^{-\frac{1}{2}}P_0$ can now be written as
\[
G(x,y,t)=\frac{1}{2\pi} \int_{-\infty}^{\infty} \E^{-\I t k^2} (\E^{\I |y-x|k}h_+(y,k) h_-(x,k) T(k)-h_+(y,0)h_-(x,0)T(0))dk.
\]
 Integrating this formula by parts, we obtain
\[
G(x,y,t)=\frac{1}{4 \pi \I t} \int_{-\infty}^{\infty} \E^{-\I t k^2} S(x,y,k)dk,
\]
where \[S(x,y,k)=\frac{\partial}{ \partial k} \left( \frac{\E^{\I |y-x|k}h_+(y,k) h_-(x,k) T(k)- h_+(y,0) h_-(x,0) T(0)}{k} \right).\]
Now we apply Lemma~\ref{vdcorputcontinuous} to get the desired $t^{-\frac{3}{2}}$ time-decay.
So in order to finish our proof, it remains to bound the $\mathcal A$-norm of $S(x,y,k)$ which follows from the lemma below.
\end{proof}

\begin{lemma} \label{lastrc}
Assume $V \in L_3^1$. Then \[\|S(x,y,\cdot)\|_{\mathcal A}\leq  C(|x|+|y|)^2.\]
\end{lemma}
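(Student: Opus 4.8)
The plan is to expand $S(x,y,k)$ using the product rule, identify the genuinely problematic pieces, and estimate each piece in the Wiener algebra $\cA$ using the lemmas from Section~\ref{sec1}. The key structural point is that both $\E^{-\I tH}P_{ac}$ and $(4\pi\I t)^{-1/2}P_0$ are ``built'' from the same boundary data at $k=0$, so that the numerator $\E^{\I|y-x|k}h_+(y,k)h_-(x,k)T(k)-h_+(y,0)h_-(x,0)T(0)$ vanishes at $k=0$; this is precisely what makes the quotient by $k$ well-defined and, after the extra $k$-derivative in $S$, still amenable to Wiener-algebra estimates. Writing $A(k)=\E^{\I|y-x|k}h_+(y,k)h_-(x,k)T(k)$ and $A(0)=h_+(y,0)h_-(x,0)T(0)$, I would first record the algebraic identity
\[
S(x,y,k)=\frac{\partial}{\partial k}\!\left(\frac{A(k)-A(0)}{k}\right),
\]
and then split $A(k)-A(0)$ into a telescoping sum so that each summand differs in exactly one factor. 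Concretely, insert and subtract intermediate products to get terms of the form $(\text{something at }k)\cdot\big(F(k)-F(0)\big)$ where $F$ runs over $\E^{\I|y-x|k}$, $h_+(y,\cdot)$, $h_-(x,\cdot)$, and $T(\cdot)$, each remaining factor frozen either at $k$ or at $0$.

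Next I would divide by $k$ term by term. For the exponential factor, $\frac{\E^{\I|y-x|k}-1}{k}=\I|y-x|\int_0^1\E^{\I|y-x|k\theta}d\theta$ lies in $\cA$ with norm $\le|y-x|\le|x|+|y|$, and each $k$-derivative brings down another factor of $|y-x|$, contributing up to the quadratic growth $(|x|+|y|)^2$ after the outer derivative in $S$. For the Jost-function factors, the quotients $\frac{h_\pm(x,k)-h_\pm(x,0)}{k}$ and their $k$-derivatives up to order $j-1=2$ are in $\cA$ with $x$-independent norm by Lemma~\ref{lem2rc} (applicable since $V\in L_3^1$, i.e.\ $j=2$). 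For the transmission-coefficient factor, $\frac{T(k)-T(0)}{k}$ and its first derivative are in $\cA$ by Theorem~\ref{mainrc}. The remaining frozen factors $h_\pm(x,k)-1$, $h_\pm(x,k)$, $h_\pm(x,0)$, $T(k)$, $T(0)$, $\E^{\I|y-x|k}$ are all bounded in $\cA_1$ uniformly in $x,y$ by Lemma~\ref{lem1rc}, Theorem~\ref{MT}, \eqref{ast}, and the trivial fact $\|\E^{\I|y-x|k}\|_{\cA_1}=1$. Finally, apply the outer $\partial_k$, use the Leibniz rule once more, and invoke the Banach-algebra property $\|fg\|_{\cA}\le\|f\|_{\cA_1}\|g\|_{\cA_1}$ (treating $\cA\subset\cA_1$) to bound $\|S(x,y,\cdot)\|_{\cA}$ by a finite sum of products of these norms, each product carrying at most two powers of $|x|+|y|$, yielding $\|S(x,y,\cdot)\|_{\cA}\le C(|x|+|y|)^2$.

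I expect the main obstacle to be purely bookkeeping rather than conceptual: organizing the telescoping decomposition so that every term, after division by $k$ and one further $k$-differentiation, falls under one of the three available estimates (Lemma~\ref{lem1rc}/\ref{lem2rc}, Theorem~\ref{MT}/\ref{mainrc}, or the trivial exponential bound) without ever needing a second difference quotient of $h_\pm$ or $T$, and checking that the total number of $k$-derivatives distributed among the factors never exceeds the order ($j=2$, resp.\ $j-1=1$) for which the relevant expression is known to be in $\cA$. One subtlety worth flagging is the role of $x,y$ versus the evaluation points in Lemma~\ref{lem2rc}: that lemma gives $\cA$-membership of $\frac{h_\pm(x,k)-h_\pm(x,0)}{k}$ with norm \emph{independent of $x$ for $\pm x\ge0$}, so the $x$- and $y$-dependence of $S$ enters \emph{only} through the $\E^{\I|y-x|k}$ factor and the explicit $|y-x|$ prefactors it generates — this is what pins the growth at exactly quadratic. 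The case distinction $h_+(0)h_-(0)\neq0$ versus $h_+(0)=h_-(0)=0$ inherited from the proof of Theorem~\ref{MT} can be handled uniformly here because in both cases the relevant factors and their regularized differences lie in $\cA$ resp.\ $\cA_1$ with the stated uniformity; alternatively one simply notes that $S$ is expressed directly through $h_\pm$, $T$ and the exponential, so no such dichotomy is visible at this level.
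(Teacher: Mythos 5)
There is a genuine gap. Your argument treats the factors $h_+(y,\cdot)$ and $h_-(x,\cdot)$ as having $\cA_1$-norms bounded uniformly in $x,y$, so that all of the $(|x|+|y|)$-growth comes from the exponential $\E^{\I|y-x|k}$. But Lemma~\ref{lem1rc} and Lemma~\ref{lem2rc} guarantee $x$-independent norms only for $\pm x\ge 0$: the underlying estimate \eqref{est1} carries the factor $\E^{\gamma_\pm(x)}$, and $\gamma_-(x)$ grows linearly as $x\to+\infty$, so the bound on $\|h_-(x,\cdot)-1\|_{\cA}$ degenerates \emph{exponentially} for $x>0$ (and symmetrically for $h_+(y,\cdot)$ with $y<0$). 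Since the kernel must be controlled for all $x\le y$, including $0\le x\le y$ and $x\le y\le 0$, your telescoping scheme as written only covers the case $x\le 0\le y$; in the other two configurations the ``frozen'' factors are not uniformly bounded and the quadratic bound does not follow.

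The missing idea is the one the paper uses: in the case $0\le x\le y$ one must first rewrite $g(x,y,k)=T(k)h_+(y,k)h_-(x,k)$ via the scattering relations \eqref{scat-rel} as $h_+(y,k)\bigl(R_+(k)h_+(x,k)\E^{2\I xk}+h_+(x,-k)\bigr)$, so that only $h_+$ evaluated at nonnegative arguments (where Lemmas~\ref{lem1rc} and \ref{lem2rc} apply uniformly) and the reflection coefficient $R_+$ (controlled by Theorems~\ref{MT} and \ref{mainrc}) appear; similarly with $h_-$ and $R_-$ for $x\le y\le 0$. The explicit factors $\E^{2\I xk}$, $\E^{-2\I yk}$ introduced by this substitution are then the additional source of the $|x|$, $|y|$ growth upon differentiation, alongside $\E^{\I|y-x|k}$. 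Your decomposition into difference quotients of the individual factors is otherwise the same strategy as the paper's and works verbatim in the mixed-sign case, but without the scattering-relation substitution the other two cases fail. (A minor slip: for $V\in L^1_3$ one has $j=2$, so Lemma~\ref{lem2rc} controls derivatives of the difference quotients only up to order $j-1=1$, not $2$; this suffices, since only one outer $k$-derivative is taken.)
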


\begin{proof}
We assume w.l.o.g.\ $x \le y$ and distinguish the cases (i) $x \le 0 \le y$, (ii) $0 \le x \le y$ and (iii) $x \le y \le 0$.
Introduce the function $g(x,y,k):=T(k) h_+(y,k) h_-(x,k)$. Then $S(x,y,k) $ can be written as:
\begin{align*}
S(x,y,k)=&\frac{\partial}{\partial k}\left( \frac{\E^{\I (y-x) k}-1}{k}\right) g(x,y,k)\\
&+ \frac{\E^{\I (y-x) k}-1}{k} \frac{\partial}{\partial k}g(x,y,k)+\frac{\partial}{\partial k} \left( \frac{g(x,y,k)-g(x,y,0)}{k} \right).
\end{align*}
The $\mathcal A$-norm of $\frac{\E^{\I (y-x) k}-1}{k}$ is bounded by $C (|x|+|y|)$ and that of its derivative by $C (|x|+|y|)^2$. So it remains to consider the $\mathcal A_1$-norms of $g(x,y,k)$ and $\frac{\partial}{\partial k} g(x,y,k)$, and $\mathcal A$-norm of $\frac{\partial}{\partial k} P(x,y,k)$, where
\[P(x,y,k)= \frac{g(x,y,k)-g(x,y,0)}{k}.\]
We start with case (i). Then $g(x,y,k)\in\mathcal A_1$ with $\mathcal A_1$-norm independent of $x$ and $y$. After applying the product rule, Lemmas \ref{lem1rc} and  \ref{mainrc} imply $\|\frac{\partial}{\partial k} g(x,y,k)\|_{\mathcal A}\leq C$. Moreover,
\begin{align*} P(x,y,k)&=\frac{T(k)-T(0)}{k}h_+(y,k)h_-(x,k) \\
&+\frac{h_+(y,k)-h_+(y,0)}{k}h_-(x,k)T(0)+ \frac{h_-(x,k)-h_-(x,0)}{k}h_+(y,0)T(0).
\end{align*}
Taking here the derivative with respect to $k$ and invoking Lemma \ref{lem1rc}, Lemma \ref{lem2rc}, and Lemma \ref{mainrc}, we are done in this case.
In the cases (ii) and (iii) we use the scattering relations \eqref{scat-rel} to see that the following representations are valid:
\[
g(x,y,k)=\begin{cases}
 h_+(y,k)\left(R_+(k)h_+(x,k)\E^{2\I xk} + h_+(x,-k)\right), & 0\leq x\leq y, \\[2mm]
 h_-(x,k)\left(R_-(k)h_-(y,k)\E^{-2\I yk} + h_-(y,-k)\right), & x\leq y\leq 0.
\end{cases}
\]
Thus $g(x,y,k)$ has an $\mathcal A_1$-norm independent of $x$ and $y$, since for any function $f(k)\in\mathcal A$ and any real $s$ we have $f(k)\E^{\I k s}\in \mathcal A$ with the norm independent of $s$.
If we take the derivative with respect to $k$, again everything is contained in $\mathcal A$ by Lemma \ref{lem1rc} and Lemma \ref{mainrc}, however we get additional terms from the derivatives of $\E^{2\I xk}$ and $\E^{2\I yk}$. So it follows that the $\mathcal A$-norm of $\frac{\partial}{\partial k} g(x,y,k)$ is at most proportional to $|x|$ or $|y|$ respectively.
Finally, let us have a look at $\frac{\partial}{\partial k}P(x,y,k)$. We observe that in case (ii) one can represent $P$ as 
\begin{align*}
P(x,y,k)=\frac{h_+(y,k)-h_+(y,0)}{k}h_+(x,-k) &+ \frac{h_+(x,-k)-h_+(x,0)}{k}h_+(y,0)\\
+\frac{R_+(k)-R_+(0)}{k} h_+(x,k) h_+(y,k) \E^{2\I x k}&+\frac{\E^{2\I x k}-1}{k} R_+(0)h_+(x,k)h_+(y,k)\\
+\frac{h_+(x,k)-h_+(x,0)}{k}h_+(y,k)R_+(0) &+ \frac{h_+(y,k)-h_+(y,0)}{k}h_+(x,0)R_+(0).
\end{align*}
Here again every summand is an element of $\mathcal A$ by Lemma \ref{lem1rc}, Lemma \ref{lem2rc}, and Theorem \ref{mainrc}. Since the derivative of $\frac{\E^{2\I x k}-1}{k}$ also occurs here, we conclude that the $\mathcal A$-norm of $\frac{\partial}{\partial k} P(x,y,k)$ is at most proportional to $|x|^2$. From the same reasons in the case (iii) this derivative will be proportional to $|y|^2$.
\end{proof}

\noindent{\bf Acknowledgments.}
I.E.~is indebted to the Department of Mathematics at the University of Vienna for its hospitality and support during the
fall of 2014, where some of this work was done. We thank Fritz Gesztesy for discussions on this topic.

\end{document}